\documentclass{article}
\usepackage{hyperref}
\usepackage[utf8]{inputenc}
\usepackage{listings}
\usepackage[bottom=1in, top=1in, left=1.25in, right=1.25in]{geometry}
\usepackage{todonotes}
\usepackage{xcolor}
\usepackage{comment}
\usepackage{graphicx}
\usepackage{url}
\usepackage{tikz}
\usepackage{mathtools}
\usepackage{amsthm}
\usetikzlibrary{positioning, calc}

\newtheorem{thm}{Theorem}[section]
\newtheorem{lemma}[thm]{Lemma}

\newtheorem{conj}[thm]{Conjecture}

\newtheorem{definition}[thm]{Definition}

\newcommand{\iso}{\mathrm{iso}}

\lstset{frame=tb,
  language=c++,
  aboveskip=3mm,
  belowskip=3mm,
  showstringspaces=false,
  columns=flexible,
  basicstyle={\small\ttfamily},
  numbers=none,
  numberstyle=\tiny\color{gray},
  keywordstyle=\color{blue},
  commentstyle=\color{dkgreen},
  stringstyle=\color{mauve},
  breaklines=true,
  breakatwhitespace=true,
  tabsize=3
}

\usepackage{listings}
\usepackage{amsfonts}
\usepackage{color}
\usepackage{amsmath}

\definecolor{dkgreen}{rgb}{0,0.6,0}
\definecolor{gray}{rgb}{0.5,0.5,0.5}
\definecolor{mauve}{rgb}{0.58,0,0.82}

\title{Number of Independent Sets in Regular and Irregular Graphs: A 31 Year Journey}
\date{\today}

\author{Dev Chheda, Ram Goel, Eddie Qiao}

\begin{document}
\maketitle

\begin{abstract}
We review the progress made on bounding the number of independent sets in $d$-regular and irregular graphs over the last 31 years. We particularly focus on contributions from Kahn \cite{KAHN_2001}, Zhao \cite{ZHAO_2009}, and Sah et al. \cite{sah2019} in incrementally proving stronger and more general versions of the upper bound. We reproduce the main results of these works, particularly focusing on the unweighted special case (with fugacity $\lambda = 1$), which allows us to provide more intuitive and clear explanations of the key ideas that have been developed in the field over three decades. 
\end{abstract}

\section{Introduction}
\label{Intro}

Let $G$ be a graph on $n$ vertices. Denote the collection of independents sets of $G$ by $\mathcal{I}(G)$, and let $i(G) = |\mathcal{I}(G)|$ be the number of independent sets. What is the maximum value of $i(G)$ when $G$ has $n$ vertices, and which graphs achieve this maximum? This paper explores the key ideas, developed over more than three decades, which resulted in the eventual resolution of this question by Sah et al. in 2019 \cite{sah2019}. 

\subsection{Historical Context}
We begin with some historical context regarding this problem (also see \cite{galvin2009upper} and \cite{ZHAO_2009}). In the 1988 Number Theory Conference at Banff \cite{banff}, Granville conjectured that for $d$-regular graphs, $i(G) \leq 2^{(1/2 + o(1))n}$. This conjecture was resolved in the 1991 paper from Alon \cite{alon1991independent}, who showed that $i(G) \leq 2^{(1/2 + O(d^{-0.1}))n}$. Alon additionally conjectured that $i(G)$ is maximized when $G$ is a disjoint union of $n/2d$ complete bipartite graphs $K_{d,d}$.  This conjecture was formalized by Kahn \cite{KAHN_2001} in 2001:
\begin{conj}[Alon \cite{alon1991independent} and Kahn \cite{KAHN_2001}]
\label{conj:regular}
    Let $G$ be a $d$-regular graph with $n$ vertices. Then,
    \begin{align*}
        i(G) \leq i(K_{d,d})^{n/(2d)} = (2^{d+1} - 1)^{n/(2d)}.
    \end{align*}
\end{conj}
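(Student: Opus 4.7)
The plan is to split the proof into two parts: the bipartite case (due to Kahn) via entropy, and the general case (due to Zhao) via a reduction to bipartite double covers. In the bipartite case, assume $G$ has parts $A, B$ of size $n/2$ each. Let $I$ be a uniformly random independent set of $G$ and write $X_v = \mathbf{1}[v \in I]$, so that $H(X_V) = \log_2 i(G)$. The idea is to use the bipartition to split $H(X_V) = H(X_A) + H(X_B \mid X_A)$ and then bound each piece vertex-by-vertex from the $B$-side.

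The first step is to apply Shearer's lemma to the $d$-to-$1$ cover of $A$ by the neighborhoods $\{N(b) : b \in B\}$: since every $a \in A$ lies in exactly $d$ such neighborhoods, we obtain $H(X_A) \le \tfrac{1}{d}\sum_{b \in B} H(X_{N(b)})$. The chain rule and the fact that conditioning reduces entropy give $H(X_B \mid X_A) \le \sum_{b \in B} H(X_b \mid X_{N(b)})$. Combining these, it suffices to show that for each $b$,
\begin{equation*}
\tfrac{1}{d} H(X_{N(b)}) + H(X_b \mid X_{N(b)}) \;\le\; \tfrac{1}{d}\log_2(2^{d+1}-1).
\end{equation*}
The structural observation is that, conditional on $X_{N(b)} \ne \mathbf{0}$, $X_b$ is forced to $0$; setting $p_b = \Pr[X_{N(b)} = \mathbf{0}]$, one gets $H(X_b \mid X_{N(b)}) = p_b$ and $H(X_{N(b)}) \le H_2(p_b) + (1-p_b)\log_2(2^d-1)$, since $X_{N(b)}$ can take at most $2^d-1$ distinct nonzero values. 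Maximizing the resulting one-variable expression over $p_b$ (the optimum occurs at $p_b = 2^d/(2^{d+1}-1)$, matching the marginal of a uniform random independent set of $K_{d,d}$ at the center of a neighborhood) yields the claimed bound, and summing over $b \in B$ gives $H(X_V) \le \tfrac{n}{2d}\log_2(2^{d+1}-1)$.

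For general $d$-regular $G$, the plan is to apply Zhao's trick: the bipartite double cover $G \times K_2$, on vertex set $V(G)\times\{0,1\}$ with edges $\{(u,0),(v,1)\}$ for every $uv \in E(G)$, is $d$-regular, bipartite, and has $2n$ vertices. The key step is the multiplicativity-style inequality $i(G)^2 \le i(G \times K_2)$, which follows from an explicit injection sending a pair $(I_1,I_2)$ of independent sets of $G$ to an independent set of $G \times K_2$ built by swapping pieces on each connected component of the subgraph induced by the symmetric difference. Applying the bipartite bound to $G \times K_2$ then gives $i(G)^2 \le i(G \times K_2) \le (2^{d+1}-1)^{2n/(2d)}$, and taking square roots recovers Conjecture \ref{conj:regular}.

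The main obstacle is pinning down the per-vertex inequality in the bipartite case: one has to identify the correct parametrization, use Shearer's lemma with exactly the right hypergraph cover of $A$, and verify the optimization constant $2^{d+1}-1$ matches $i(K_{d,d})$. A secondary but nontrivial step is Zhao's inequality $i(G)^2 \le i(G\times K_2)$, whose coupling/swapping proof is combinatorial rather than entropic and so requires a separate construction. Once these two pieces are in hand, the rest of the argument is mechanical.
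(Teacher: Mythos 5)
Your proposal is correct and follows essentially the same route as the paper: Kahn's entropy argument (split $H$ across the bipartition, Shearer's lemma on the neighborhoods $N(b)$, the per-vertex optimization with maximum at $p=2^d/(2^{d+1}-1)$) for the bipartite case, then Zhao's bipartite double cover together with $i(G)^2 \le i(G\times K_2)$ via the swapping construction for general $d$-regular $G$. The only cosmetic difference is that you phrase the swapping step as an injection built on components of the symmetric difference, while the paper packages it as a bijection between $\mathcal{I}(G)\times\mathcal{I}(G)$ and the set $\mathcal{J}(G)$ of independent pairs with bipartite union; these are the same trick.
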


Kahn \cite{KAHN_2001} proved that \ref{conj:regular} result holds when $G$ is bipartite by using entropy methods. The next major improvement came in 2009 from Zhao \cite{ZHAO_2009} who used a bipartite-swapping trick to extend Kahn's result to all $d$-regular graphs $G$, thereby resolving Conjecture \ref{conj:regular}. 

Kahn also conjectured an extension to irregular graphs, which was finally resolved by Sah et al. in 2019 \cite{sah2019}. Specifically, the result proven by Sah et al. is:
\begin{thm}
\label{thm:main}
    Let $G$ be a graph without isolated vertices. Let $d_v$ be the degree of $v$ in $G$. Then,
    \[ i(G) \le \prod_{uv\in E(G)} i(K_{d_u,d_v})^{1/(d_ud_v)}. \]
    Equality holds if and only if $G$ is a disjoint union of complete bipartite graphs. 
\end{thm}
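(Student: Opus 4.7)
The plan is to follow the historical arc of the problem: first apply Zhao's bipartite swapping to reduce to bipartite graphs, then generalize Kahn's entropy argument to accommodate irregular degree sequences by organizing the entropy bound as a sum over edges. For the reduction, let $G \times K_2$ denote the bipartite double cover of $G$, with vertex set $V(G) \times \{0,1\}$ and edges $\{(u,0)(v,1), (u,1)(v,0) : uv \in E(G)\}$. Each vertex $(v,i)$ inherits the degree $d_v$ and each edge $uv \in E(G)$ becomes two edges in the cover with the same pair of endpoint-degrees, so the right-hand side of the theorem for $G \times K_2$ is exactly the square of that for $G$. Zhao's bipartite swapping trick yields $i(G)^2 \leq i(G \times K_2)$, so it suffices to prove Theorem \ref{thm:main} when $G$ is bipartite (and still has no isolated vertices, since every $(v,i)$ has a neighbor $(u, 1-i)$).

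Assume $G$ is bipartite with parts $A, B$, and let $\mathbf{I}$ be a uniformly random independent set of $G$, so $H(\mathbf{I}) = \log_2 i(G)$. Writing $X_v = \mathbf{1}[v \in \mathbf{I}]$ and $Y_v = \mathbf{I} \cap N(v)$, the chain rule combined with the fact that $X_v \perp \mathbf{I}_B \mid Y_v$ (conditioned on $\mathbf{I}_B$, each $X_a$ is an independent fair coin when $Y_a = \emptyset$) gives
\[ H(\mathbf{I}) = H(\mathbf{I}_B) + H(\mathbf{I}_A \mid \mathbf{I}_B) \leq H(\mathbf{I}_B) + \sum_{v \in A} H(X_v \mid Y_v). \]
Next, a fractional Shearer inequality bounds $H(\mathbf{I}_B)$ using the cover $\{N(v)\}_{v \in A}$: since each $b \in B$ lies in $d_b$ such sets, any weights $\alpha_v \geq 0$ satisfying $\sum_{v \in N(b)} \alpha_v \geq 1$ for every $b$ give $H(\mathbf{I}_B) \leq \sum_{v \in A} \alpha_v H(Y_v)$. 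In Kahn's regular case one takes $\alpha_v \equiv 1/d$; in the irregular setting the weights must be tuned carefully, and one likely must iterate the decomposition from both sides, symmetrizing in $A$ and $B$, so that after regrouping the bound organizes into a sum over edges in which edge $uv$ carries weight proportional to $1/(d_u d_v)$.

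The heart of the proof, and the main obstacle, is then a sharp local per-edge inequality of the form
\[ (\text{local entropy contribution at edge } uv) \leq \frac{\log_2 i(K_{d_u, d_v})}{d_u d_v}, \]
obtained by identifying the joint distribution of $\mathbf{I}$ in the neighborhood of an edge $uv$ with a perturbation of the uniform distribution on $\mathcal{I}(K_{d_u, d_v})$. In the regular case one exploits the symmetry $d_u = d_v$ to average entropies across the two bipartition classes, but in the irregular setting this symmetry is lost and one must prove a genuinely asymmetric two-parameter inequality. I expect this step — perhaps via an entropy-maximization (Lagrangian) argument constrained by the marginals $\Pr[u \in \mathbf{I}]$ and $\Pr[v \in \mathbf{I}]$, or by convexity of the log-partition function of the hardcore model on $K_{d_u, d_v}$ — to be the most delicate part of the argument. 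Finally, the equality characterization follows by tracing backward through each step: Zhao's bipartite swapping is tight only when $G$ is bipartite, Shearer's inequality is tight when the cover splits the graph into mutually independent blocks, and the local edge lemma is tight only when each block is exactly $K_{d_u, d_v}$; together these force $G$ to be a disjoint union of complete bipartite graphs.
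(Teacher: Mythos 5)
Your first step --- passing to the bipartite double cover $G \times K_2$, noting that degrees are preserved so the right-hand side squares, and invoking $i(G)^2 \le i(G \times K_2)$ from the swapping trick (Lemma \ref{lma:bipartite}) --- is exactly how the paper reduces Theorem \ref{thm:main} to the bipartite case. But from that point on your plan diverges into a route that does not go through, and the place where it stalls is precisely the hard part of the theorem. You propose to redo Kahn's entropy argument with a fractional Shearer cover and then close the argument with a ``sharp local per-edge inequality'' giving each edge a contribution of $\log i(K_{d_u,d_v})/(d_u d_v)$; you do not prove this inequality, you only express the expectation that some Lagrangian or convexity argument will yield it. That is the entire content of the theorem: in the regular case Kahn's computation works because the single parameter $q(v)$ can be optimized pointwise and the answer decouples, but in the irregular case the entropy bound does not organize itself edge-by-edge into the product $\prod_{uv} i(K_{d_u,d_v})^{1/(d_u d_v)}$, and no choice of Shearer weights $\alpha_v$ is known to make it do so. Indeed the paper states explicitly that the entropy approach is not sufficient here and that Sah, Sawhney, Stoner, and Zhao had to find a different method; the conjecture stayed open for nearly two decades precisely because the natural entropy generalization you sketch could not be completed.

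What the paper actually does after the bipartite (and connected) reduction is entirely entropy-free: it inducts on the number of vertices using the recursion $i(G) = i(G-w) + i(G-w-N(w))$ with $w$ a maximum-degree vertex, so that by induction it suffices to prove the purely combinatorial inequality $j(G-w) + j(G-w-N(w)) \le j(G)$, where $j(G) = 2^{\iso(G)}\prod_{uv\in E(G)} i(K_{d_u,d_v})^{1/(d_u d_v)}$ is the target quantity (see (\ref{eq:j-inequality}) and Theorem \ref{thm:sah-main}). Common factors coming from edges at distance $\ge 3$ from $w$ cancel, so the inequality localizes to the ball of radius $3$ around $w$, and it is then established by two applications of H\"older's inequality together with careful computation. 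Your equality analysis inherits the same problem: it is phrased as tightness conditions for entropy steps that are never actually carried out, whereas in the paper equality is traced through the induction and the H\"older steps. So the proposal is not a proof as written; to repair it you would either have to supply the missing asymmetric per-edge entropy lemma (which is not known to exist) or switch to the induction-plus-H\"older scheme the paper uses.
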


\subsection{Paper Overview}
In our paper, we first cover the main results from Kahn \cite{KAHN_2001} in Section \ref{Kahn} and Zhao \cite{ZHAO_2009} in Section \ref{Zhao} which prove Conjecture \ref{conj:regular}, which is essentially a specialized form of Theorem \ref{thm:main} for $d$-regular graphs. We then cover the key methods and results from Sah et al. \cite{sah2019} in Section \ref{Sah}, of which the primary result is Theorem \ref{thm:main}. Sah et al. also prove some lower bounds on $i(G)$, but we only focus on covering the upper bound. We briefly state the lower bound results and some generalizations of the upper bound results in Section \ref{further}. 

All of the results which we cover can be extended using the hard-core model. The partition function of the hard-core model for a graph $G$ is $P_G(\lambda) = \sum_{I \in \mathcal{I}(G)} \lambda^{|I|}$ where the parameter $\lambda$ is often referred to as \textit{fugacity}. Each of the key results which we cover in Sections \ref{Kahn} - \ref{Sah} can be generalized with a weighted version using the hard-core model. 

For example, a weighted version of Theorem \ref{thm:main} using the partition function is:
\begin{thm}
\label{thm:main-general}
    Let $G$ be a graph without isolated vertices. Let $d_v$ be the degree of $v$ in $G$. Let $\lambda > 0$. Then,
    \[ P_G(\lambda) \leq \prod_{uv\in E(G)} P_{K_{d_u,d_v}}(\lambda)^{1/(d_ud_v)}. \]
\end{thm}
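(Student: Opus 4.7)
My plan is to follow the three-stage strategy developed by Kahn, Zhao, and Sah et al., adapted throughout to the weighted hard-core setting. The first step is to reduce to the bipartite case via a weighted version of Zhao's bipartite swapping trick: letting $G' = G \times K_2$ denote the bipartite double cover, one shows $P_G(\lambda)^2 \leq P_{G'}(\lambda)$. Since $G'$ is bipartite with the same degree sequence as $G$ (each vertex doubled) and with each edge of $G$ lifting to two edges of $G'$, the right-hand side of the target inequality evaluated on $G'$ is precisely the square of the right-hand side on $G$. It therefore suffices to prove the bipartite version.

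Assuming $G$ is bipartite with parts $A \sqcup B$, I would draw $I$ from the hard-core distribution $\mu_\lambda$ and use the variational characterization of the log-partition function, $\log P_G(\lambda) = H(I) + \log\lambda \cdot \mathbb{E}|I|$. The goal is to decompose the right-hand side into a sum over edges, with each edge contributing at most $(1/(d_u d_v)) \log P_{K_{d_u,d_v}}(\lambda)$. I would split by the chain rule, $H(I) = H(I_A) + H(I_B \mid I_A)$, and then apply a fractional Shearer-type inequality on each piece. The crucial choice is to allocate per-edge weights (essentially $1/d_u$ when covering $u$'s side and $1/d_v$ when covering $v$'s side) so that the edgewise decomposition produces the factor $1/(d_u d_v)$. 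The $\log\lambda \cdot \mathbb{E}|I|$ term is distributed similarly, with each vertex $u$'s contribution spread uniformly across its $d_u$ incident edges.

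The final step is a local per-edge lemma: for every edge $uv$, the local term produced by the decomposition is at most $(1/(d_u d_v)) \log P_{K_{d_u,d_v}}(\lambda)$, with equality attained by the hard-core measure on $K_{d_u,d_v}$. This reduces to a finite convex optimization over probability measures on $\mathcal{I}(K_{d_u,d_v})$, maximizing an entropy-plus-weighted-size functional; a Lagrangian computation identifies the Gibbs measure of $K_{d_u,d_v}$ at fugacity $\lambda$ as the unique maximizer. I expect the main obstacle to lie in the bipartite step: setting up the fractional decomposition so that the weights cleanly distribute edgewise as $1/(d_u d_v)$. In the regular case uniform Shearer provides this essentially for free, but for irregular degrees one must choose a two-sided fractional cover, verify the vertex-coverage constraint $\sum_{v \sim u} \alpha_{uv} \geq 1$ at every vertex, and then match the resulting joint entropy on local neighborhoods to the $K_{d_u,d_v}$ optimization, which requires care about conditional independence structure of the hard-core measure on a bipartite graph.
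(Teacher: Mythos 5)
Your first step---the weighted bipartite swapping reduction, using that the bijection of Lemma \ref{lma:bipartite} preserves $|A|+|B|$ so that $P_G(\lambda)^2 \le P_{G\times K_2}(\lambda)$, and that the right-hand side squares under the double cover---is correct and is exactly how the argument in Section \ref{Sah} begins. The problem is everything after that. Your plan is to push Kahn's entropy method through the irregular case: write $\log P_G(\lambda) = H(I) + \log\lambda\cdot\mathbb{E}|I|$ for the hard-core measure, apply a fractional Shearer-type cover with per-edge weights chosen so that each edge $uv$ receives exponent $1/(d_u d_v)$, and then close with a ``local per-edge lemma'' proved by a Lagrangian computation on $\mathcal{I}(K_{d_u,d_v})$. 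That local lemma and the fractional allocation are asserted, not established, and they are precisely the step that is not known to work: the paper states explicitly that the entropy approach of Kahn is not sufficient for irregular graphs, which is why the conjecture stood from 2001 until 2019. In Kahn's regular bipartite proof the neighborhoods $N(v)$, $v\in\mathcal{E}$, cover each vertex of $\mathcal{O}$ exactly $d$ times, so Shearer applies with a single uniform multiplicity and the per-vertex terms can be optimized independently over the scalar $q(v)$. With irregular degrees the cover multiplicities differ vertex by vertex, the conditional-entropy terms $H(X_v\mid \mathbf{1}_{Q_v})$ do not split into contributions indexed by edges, and no choice of fractional weights is known that makes the local terms decouple into independent per-edge optimizations with the exponents $1/(d_u d_v)$; the correlations of the hard-core measure across overlapping neighborhoods are exactly what blocks this. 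What you flag as ``the main obstacle'' is therefore the entire content of the theorem, and your proposal contains no mechanism for overcoming it.

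The proof that the paper follows (due to Sah, Sawhney, Stoner, and Zhao) abandons entropy after the bipartite reduction. One reduces to connected bipartite $G$, inducts on the number of vertices using the deletion recursion (in the weighted setting, $P_G(\lambda) = P_{G-w}(\lambda) + \lambda\, P_{G-w-N(w)}(\lambda)$) with $w$ a vertex of maximum degree, and applies the induction hypothesis to $G-w$ and $G-w-N(w)$. This reduces the theorem to a purely local inequality of the form $j(G-w) + \lambda\, j(G-w-N(w)) \le j(G)$ involving only vertices within distance $3$ of $w$ (the sets $V_1, V_2, V_3$ and edge classes $E_1, E_2, E_3$ of Section \ref{Sah}), which is then verified by two applications of H\"older's inequality together with explicit computation using $i(K_{a,b}) = 2^a + 2^b - 1$ and its weighted analogue. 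If you want to salvage your write-up, keep your reduction paragraph and replace the entropy core with this induction-plus-H\"older scheme.
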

Sah et al. \cite{sah2019} prove this more general version (along with a double-weighted version which we discuss in Section \ref{subsec:general}). 

We only focus on the $\lambda = 1$ case (e.g. which would reduce Theorem \ref{thm:main-general} to Theorem \ref{thm:main}), since this allows us to focus the key ideas, methods, and insights, all of which remain the same even in the more general case.

\section{Bipartite and Regular Graphs -- Kahn}
\label{Kahn}
\subsection{Overview of Kahn's Result}
Kahn proved the first result in this progression in 1999, by proving Theorem \ref{thm:main} for the case of $G$ being bipartite and $d$-regular. 
\begin{thm}[Theorem 1.9 of Kahn \cite{KAHN_2001}]
    \label{thm:Kahn}
    If $G$ is a $d$-regular bipartite graph on $n$ vertices, then
    \[ i(G) \le (2^{d+1}-1)^{n/(2d)}. \]
    Equality holds when $G$ is a disjoint union of $K_{d,d}$'s. 
\end{thm}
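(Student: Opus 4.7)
My plan is to follow Kahn's original entropy argument. Take $I$ to be uniformly random in $\mathcal{I}(G)$ so that $H(I) = \log_2 i(G)$, and write $A \sqcup B$ for the bipartition of $G$, with $|A| = |B| = n/2$. Set $X_v = \mathbf{1}[v \in I]$ and, for $S \subseteq V(G)$, write $X_S = (X_v)_{v \in S}$.

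The first step is to decompose $H(I) = H(X_A) + H(X_B \mid X_A)$ and handle each piece. Because $G$ is bipartite, no two vertices of $B$ are adjacent, so conditional on $X_A$ the bits $\{X_v : v \in B\}$ are independent and $X_v$ depends on $X_A$ only through $X_{N(v)}$; this yields $H(X_B \mid X_A) = \sum_{v \in B} H(X_v \mid X_{N(v)})$. For $H(X_A)$, I would apply Shearer's entropy inequality to the family of neighborhoods $\{N(v)\}_{v \in B}$, which covers each vertex of $A$ exactly $d$ times by $d$-regularity, giving $H(X_A) \leq \tfrac{1}{d}\sum_{v \in B} H(X_{N(v)})$. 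Combining these,
\[
    \log_2 i(G) \;\leq\; \sum_{v \in B} \Bigl[ \tfrac{1}{d} H(X_{N(v)}) + H(X_v \mid X_{N(v)}) \Bigr].
\]

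It now suffices to bound each bracketed term by $\tfrac{1}{d}\log_2(2^{d+1}-1)$. For $v \in B$, let $q_v := \Pr[N(v) \cap I \neq \emptyset]$; since $I$ is a uniform independent set, $X_v$ is a fair coin on the event $\{N(v) \cap I = \emptyset\}$ and is $0$ otherwise, so $H(X_v \mid X_{N(v)}) = 1 - q_v$. Conditioning the entropy of $X_{N(v)}$ on the indicator of this event, and using that $X_{N(v)}$ takes at most $2^d - 1$ nonzero values, gives $H(X_{N(v)}) \leq H(q_v) + q_v \log_2(2^d - 1)$. The problem thus reduces to the one-variable inequality
\[
    H(q) + q\log_2(2^d - 1) + d(1-q) \;\leq\; \log_2(2^{d+1} - 1), \qquad q \in [0,1],
\]
which is a short calculus exercise attaining its maximum at $q^\ast = (2^d - 1)/(2^{d+1} - 1)$ with value exactly $\log_2(2^{d+1}-1)$.

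The main obstacle I anticipate is arranging the Shearer decomposition and the conditional-independence step cleanly, since these are the two places where $d$-regularity and bipartiteness respectively enter; once that framework is set up, the per-vertex optimization is mechanical. For the equality case I would trace the inequalities backward: equality in Shearer together with $q_v \equiv q^\ast$ forces the local marginal at every $v \in B$ to match that of a uniform independent set on $K_{d,d}$, and a short combinatorial argument then shows $G$ must be a disjoint union of copies of $K_{d,d}$.
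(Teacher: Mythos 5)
Your proposal is correct and follows essentially the same entropy argument as the paper: a uniform random independent set, the chain-rule split across the bipartition with the conditional entropy of each $X_v$ controlled by the ``no occupied neighbor'' event, Shearer's lemma applied to the neighborhoods (each vertex covered $d$ times by regularity), and the same one-variable optimization, whose maximizer $q^\ast = (2^d-1)/(2^{d+1}-1)$ is just the complement of the paper's $x_0 = 2^d/(2^{d+1}-1)$ because you define $q_v$ as the probability that $N(v)\cap I$ is nonempty rather than empty. The only substantive difference is cosmetic: you assert exact conditional independence where the paper only needs subadditivity, and your equality discussion goes beyond what the stated theorem (the ``if'' direction) requires.
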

In Sah et al. \ref{Sah}, the upper bound above is stated also as $i(K_{d,d})^{n/(2d)}$, since $i(K_{d,d}) = 2^{d+1}-1$. This is because each independent set of a bipartite graph is a subset of one of the two parts, and each part has $2^d$ subsets. We subtract 1 for overcounting the empty set twice.

In Kahn \cite{KAHN_2001}, Theorem \ref{thm:Kahn} is stated in log-form:
\[ \log i(G) \le \frac{n}{2d} \log(2^{d+1}-1). \]
The reason for this is that the argument used to prove this is based on entropy, which is more conducive to the log-form. We will go into detail on relevant background and notation in \ref{subsection:Entropy}, and the proof in \ref{subsection:Kahn_Proof}. 

\subsection{Entropy}
\label{subsection:Entropy}
Entropy is a fundamental concept in information theory, originally introduced by Claude Shannon. It quantifies the uncertainty or the average amount of information inherent in a random variable's possible outcomes. The entropy of a discrete random variable $X$ with pdf $p(x) = \text{Pr}(X=x)$ is defined as 
\[ H(X) = \sum_x p(x) \log \frac{1}{p(x)}. \]
The base of the logarithm is set to 2, in which case the entropy is measured in bits.

Conditional entropy is given by
\[ H(X\mid Y) = \mathbb{E}H(X\mid Y=y) = \sum_y p(y) \sum_x p(x\mid y) \log \frac{1}{p(x\mid y)}. \]
For a random vector $X = (X_1,\ldots,X_n)$, define
\[ H(X) = \sum_{i=1}^n H(X_i \mid X_{i-1},\ldots,X_1). \]
Some useful inequalities are used. These are well-known and often used in literature on information theory and are used freely without proof. 
\begin{equation}
    H(X) \le \log |\text{range}(X)|.
\end{equation}
\begin{equation}
        H(X\mid Y) \le H(X). 
\end{equation}
The above equation essentially says that given more information, entropy decreases. This intuition is generalized by the fact that if $Y$ determines $Z$, then $H(X\mid Y) \le H(X\mid Z)$. Similarly the joint entropy of a collection of r.v.'s is at most the sum of the individual entropies, which is intuitively true since there could be correlation among the r.v.'s. A more general version of this fact is the following:

\begin{lemma}[Shearer's Lemma]
    \label{lemma: Shearer}
    Let $X=(X_1,\ldots,X_n)$ be a random vector and $\mathcal{A}$ a collection of subsets of $[n]$, with each element of $[n]$ contained in at least $m$ members of $\mathcal{A}$. Then
    \[ H(X) \le \frac1m \sum_{A\in \mathcal{A}} H(X_A). \]
\end{lemma}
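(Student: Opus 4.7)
The plan is to prove Shearer's Lemma by expanding each $H(X_A)$ using the chain rule, then lower-bounding the resulting conditional entropies so that summing over $\mathcal{A}$ naturally produces $H(X)$ with a factor of $m$. The only inequalities used will be the two entropy facts stated just above the lemma: conditioning on additional information cannot increase entropy, and if $Y$ determines $Z$ then $H(X\mid Y)\le H(X\mid Z)$.

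First I would fix the natural ordering on $[n]$, and for each $A\in\mathcal{A}$ apply the chain rule inside $A$:
\[ H(X_A) = \sum_{i\in A} H(X_i \mid X_j : j\in A,\ j<i). \]
Since $\{X_j : j\in A,\ j<i\}$ is a subfamily of $\{X_1,\ldots,X_{i-1}\}$, the former is determined by the latter, so by the ``determines'' inequality,
\[ H(X_i \mid X_j : j\in A,\ j<i) \ge H(X_i \mid X_1,\ldots,X_{i-1}). \]
Thus $H(X_A) \ge \sum_{i\in A} H(X_i \mid X_1,\ldots,X_{i-1})$.

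Next I would sum this over $A\in\mathcal{A}$ and interchange the order of summation, grouping terms by $i\in[n]$:
\[ \sum_{A\in\mathcal{A}} H(X_A) \ge \sum_{i=1}^n \bigl|\{A\in\mathcal{A}: i\in A\}\bigr|\cdot H(X_i \mid X_1,\ldots,X_{i-1}). \]
By the covering hypothesis, each coefficient is at least $m$, so the right-hand side is at least $m\sum_{i=1}^n H(X_i\mid X_1,\ldots,X_{i-1}) = m\,H(X)$ by the definition of joint entropy given in the paper. Dividing by $m$ yields the claim.

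The main obstacle is getting the direction of the inequality correct: one is tempted to condition on less (i.e.\ drop conditions to get an upper bound), but here we need a lower bound on each $H(X_A)$, which is why we condition on \emph{more} information, namely the full prefix $X_1,\ldots,X_{i-1}$ rather than just the part supported in $A$. Once that asymmetry is set up correctly, the rest is a bookkeeping exercise: a swap of summation and a single application of the covering hypothesis.
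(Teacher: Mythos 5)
Your argument is correct: the chain-rule expansion of $H(X_A)$, the lower bound obtained by conditioning on the full prefix $X_1,\ldots,X_{i-1}$ (via the ``$Y$ determines $Z$'' monotonicity), the swap of summation, and the covering hypothesis combine exactly as you describe, and this is the standard proof of Shearer's Lemma. Note that the paper itself gives no proof at all --- it states Shearer's Lemma among the entropy facts ``used freely without proof'' --- so there is no argument in the paper to compare against; your write-up supplies the missing justification. The only step you use tacitly is that each conditional entropy $H(X_i \mid X_1,\ldots,X_{i-1})$ is nonnegative, which is needed when you replace the coefficient $\bigl|\{A\in\mathcal{A}: i\in A\}\bigr|$ by the smaller quantity $m$; this is immediate from the definition of conditional entropy as a nonnegative combination of terms $p\log(1/p)$, but it is worth stating since it is the one fact not listed among the paper's quoted inequalities. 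Your closing remark about the direction of the conditioning inequality correctly identifies the one place where the argument could go wrong, and you have it the right way around.
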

The notation $X_A$ denotes the restriction of $X$ onto indices $A$, i.e. $X_A = (X_i : i\in A)$. 

\subsection{Proof of Theorem \ref{thm:Kahn}}
\label{subsection:Kahn_Proof}

\subsubsection{Notation}
We first set up notation. Let $I$ be a random member of $\mathcal{I}(G)$, chosen uniformly at random. This is one of the key ideas, since a uniform distribution makes 
\[ H(I) = \sum_{I\in \mathcal{I}(G)} \frac{1}{i(G)} \log i(G) = \log i(G). \]
Hence we are interested in upper bounds on the entropy of $I$, which intuitively quantifies the information on $I$. 

Denote $\mathbf{1}_v$ to be the indicator of $v\in I$, and $p(v)$ for $\text{Pr}(v\in I)$. Let 
\[ X_v := I\cap N(v).\]
A key idea is that $I$ cannot contain $v$ and its neighbors. So only one of $\mathbf{1}_v$ and $X_v$ can contribute towards the information provided by $I$, which is in turn encoded by the entropy of $I$.

Denote 
\[ q(v) := \text{Pr}(Q_v)\] where the event $Q_v$ is defined as
\[ Q_v := \{X_v = \emptyset\} = \{ I \cap N(v) = \emptyset\}.\] That is, $Q_v$ is the event that no neighbors of $v$ are in the independent set $I$. 
Similar intuition as the above implies $X_v$ and $Q_v$ are uncorrelated in a sense, and $\mathbf{1}_v$ and $\overline{Q_v}$ are uncorrelated in a sense, the reason being that regardless of whether $v$ is in $I$ or not, if $X_v = \emptyset$, then the neighbors of $v$ provide no information on $I$. 

\subsubsection{Proof}
\label{prelim_ineqs}
Recall that $G$ is bipartite. Let the two parts be $\mathcal{E}$ and $\mathcal{O}$ (for even and odd, respectively). All sums will be over $v\in \mathcal{E}$. Note
\begin{align*}
    H(I) &= H(I\cap \mathcal{O}) + H(I\cap \mathcal{E} \mid I\cap \mathcal{O}) \\
    &\le H(I\cap \mathcal{O}) + \sum H(\mathbf{1}_v\mid I\cap \mathcal{O}) \\
    &\le H(I\cap \mathcal{O}) + \sum  H(\mathbf{1}_v\mid \mathbf{1}_{Q_v}) \\ 
    &= H(I\cap \mathcal{O}) + \sum  q(v) 
\end{align*}
The first equality follows since $G = \mathcal{E} \sqcup \mathcal{O}$. The second inequality follows by splitting up joint entropy into individual sums, and the last by containment of neighbors in the other bipartite part. The latter aforementioned step is where the bipartite part is crucial. The last equality follows since $H(\mathbf{1}_v\mid \mathbf{1}_{Q_v}) = q(v)$ due to picking $I$ uniformly at random. 

We establish one more inequality chain. We utilize Shearer's Lemma, noting that the sets $N(v)$ cover each $x \in \mathcal{O}$ exactly $d$ times, due to regularity. Each of the $d$ neighbors lies in the other bipartite part. Note
\begin{align*}
    H(I\cap \mathcal{O}) &\le \frac{1}{d} H(X_v) \\
    &= \frac1d \sum H(\mathbf{1}_{Q_v}) + H(X_v \mid \mathbf{1}_{Q_v}) \\
    &= \frac1d \sum H(q(v))  + (1-q(v))H(X_v\mid \overline{Q_v}) \\
    &\le \frac1d \sum H(q(v))  + (1-q(v))\log(2^d-1).
\end{align*}
The first step followed from Shearer's Lemma. The final inequality above follows from the crude bound $H(X) \le \log |\text{range}(X)|$, since there are $2^d-1$ nonempty subsets of $N(v)$ and $X_v = I\cap N(v) \subseteq N(v)$. 

Combining these two inequality chains, we get
\begin{align*}
    H(I) &\le \frac1d \sum H(q(v)) + (1-q(v)) \log (2^d-1) + \sum q(v) \\
    &= \frac{n}{2d} \log(2^d-1)  + \frac1d \sum H(q(v)) + q(v) \log \frac{2^d}{2^d-1}. 
\end{align*}
By differentiation, the maximum of the function $H(x) + x \log \frac{2^d}{2^d-1}$ occurs at $x_0 = \frac{2^d}{2^{d+1}-1}$. Now, plugging this back into the expression above and reducing algebraically gives the desired result of $\frac{n}{2d} \log(2^{d+1}-1)$, as desired. 

\section{Extension to all Regular Graphs -- Zhao}
\label{Zhao}

Kahn conjectured that the upper bound could be extended to all $d$-regular graphs, not just bipartite. In 2009, Zhao~\cite{ZHAO_2009} used a bipartite-swapping trick (later expanded on in \cite{zhao2011bipartite}) to resolve this conjecture. Specifically, Zhao proves the following:

\begin{thm}[Theorem 2 of Zhao \cite{ZHAO_2009}]
    \label{thm:regular}
    If $G$ is a $d$-regular graph on $n$ vertices, 
    \begin{align*}
        i(G) \leq i(K_{d,d})^{n/(2d)} = (2^{d+1}-1)^{n/(2d)}.
    \end{align*}
    Equality holds if and only if $G$ is a disjoint union of $K_{d,d}$’s.
\end{thm}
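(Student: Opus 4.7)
The plan is to reduce the general $d$-regular case to the bipartite $d$-regular case already established in Theorem~\ref{thm:Kahn}. The natural vehicle is the \emph{bipartite double cover} $G \times K_2$, whose vertex set is $V(G) \times \{0,1\}$ and whose edges are $\{(u,0)(v,1) : uv \in E(G)\}$. This graph is bipartite, $d$-regular, and has $2n$ vertices, so Theorem~\ref{thm:Kahn} gives
\[ i(G \times K_2) \le (2^{d+1}-1)^{2n/(2d)} = (2^{d+1}-1)^{n/d}. \]
It therefore suffices to prove the key lemma $i(G)^2 \le i(G \times K_2)$; taking square roots then delivers the desired bound.

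To prove the lemma I would construct an injection $\phi \colon \mathcal{I}(G)^2 \to \mathcal{I}(G \times K_2)$ by identifying elements of $\mathcal{I}(G \times K_2)$ with ordered pairs $(S_0, S_1)$ of subsets of $V(G)$ having no edges of $G$ between them. The naive attempt $(I, J) \mapsto (I, J)$ already lands in $\mathcal{I}(G \times K_2)$ whenever $I$ and $J$ have no cross edges, so the task is to resolve the remaining conflicts. The crucial structural observation is that since $I$ and $J$ are independent in $G$, every edge of $G$ with both endpoints in $I \cup J$ must run between $I \setminus J$ and $J \setminus I$, so the induced subgraph $G[I \triangle J]$ is bipartite and no edge of $G$ joins $I \cap J$ to $I \triangle J$.

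The \emph{bipartite-swapping trick} then exploits this component by component. Fix a linear order on $V(G)$. For each connected component $C$ of $G[I \triangle J]$, examine the smallest-labelled vertex of $C$: if it lies in $I \setminus J$, assign all of $C$ to $S_0$; otherwise assign all of $C$ to $S_1$. Finally, place $I \cap J$ into both $S_0$ and $S_1$. The structural observation guarantees that no edge of $G$ crosses from $S_0$ to $S_1$, because edges within $I \cap J$ are forbidden by independence, edges between $I \cap J$ and a component are forbidden for the same reason, edges between distinct components do not exist, and edges inside a single component are kept entirely on one side. For injectivity, one recovers $I \cap J = S_0 \cap S_1$, reads off the components of $G[I \triangle J]$ together with their unique bipartitions from $G$ alone, and uses whether the component sits in $S_0 \setminus S_1$ or $S_1 \setminus S_0$, combined with the smallest-vertex rule, to decide which side of each component originally belonged to $I$ versus $J$.

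The main obstacle I anticipate is the careful bookkeeping of this injection, particularly verifying that the smallest-vertex rule both eliminates every cross edge and admits a well-defined decoding. Once the lemma is established, the main bound follows by the square-root step above. For the equality clause, equality in Theorem~\ref{thm:Kahn} forces $G \times K_2$ to be a disjoint union of copies of $K_{d,d}$; a short analysis of the double cover (a component of $G$ contributes a single connected component to $G \times K_2$ when it is non-bipartite and two isomorphic copies when it is bipartite) then forces every component of $G$ itself to be $K_{d,d}$.
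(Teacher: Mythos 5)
Your proposal is correct and follows essentially the same route as the paper: establish $i(G)^2 \le i(G \times K_2)$ by a bipartite-swapping argument and then apply Theorem~\ref{thm:Kahn} to the $d$-regular bipartite double cover; your component-wise smallest-vertex injection into $\mathcal{I}(G \times K_2)$ is a careful variant of the paper's swapping bijection with $\mathcal{J}(G)$, and in fact it pins down the canonical choice of bipartition that the paper's sketch glosses over. The one caveat is the equality clause: you invoke the \emph{only if} direction of equality in Theorem~\ref{thm:Kahn}, which the paper states only in the \emph{if} direction (and the paper's proof of Theorem~\ref{thm:regular} omits the equality analysis entirely), so that step is assumed rather than proved, though your double-cover component analysis deducing that every component of $G$ must be $K_{d,d}$ is sound once that characterization is granted.
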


\subsection{Bipartite swapping trick}
\label{subsec:bipartite}

For $A, B \in V(G)$, we say that $A$ and $B$ are independent from each other if $G$ does not contain edge $ab$ for any $a \in A$, $b \in B$. Let $G[A]$ be the subgraph of $G$ induced by $A$. We define $\mathcal{J}(G)$ as the set of pairs $(A, B)$ such that $A$ and $B$ are independent and $G[A \cup B]$ is bipartite. 

\begin{lemma}[Bipartite-swapping trick of Zhao \cite{ZHAO_2009}]
\label{lma:bipartite}
     Let $G$ be any graph. There exists a bijection between $\mathcal{I}(G) \times \mathcal{I}(G)$ and $\mathcal{J}(G)$.
\end{lemma}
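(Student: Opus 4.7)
The plan is to exhibit explicit maps $\phi : \mathcal{I}(G) \times \mathcal{I}(G) \to \mathcal{J}(G)$ and $\psi : \mathcal{J}(G) \to \mathcal{I}(G) \times \mathcal{I}(G)$ and verify they are mutually inverse. The key structural observation is that for any two independent sets $I, J$ of $G$ the induced subgraph $G[I \cup J]$ is bipartite: the common vertices $I \cap J$ are isolated in $G[I \cup J]$ (any edge incident to $I \cap J$ would violate the independence of $I$ or of $J$), and all remaining edges of $G[I \cup J]$ run between $I \setminus J$ and $J \setminus I$. Consequently every connected component $C$ of $G[I \triangle J]$ is bipartite with one side contained in $I \setminus J$ and the other in $J \setminus I$. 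To pin down a canonical choice of side per component I fix once and for all a total order on $V(G)$ and let $v_0(C)$ denote the least vertex of $C$.

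To define $\phi(I, J)$, I place every vertex of $I \cap J$ into both $A$ and $B$, and for each component $C$ of $G[I \triangle J]$ I place all of $C$ into $A$ if $v_0(C) \in I$ and into $B$ if $v_0(C) \in J$. Every edge of $G[I \cup J]$ lies inside a single component of $G[I \triangle J]$, so such an edge ends up entirely inside $A$ or entirely inside $B$; thus no $G$-edge runs between $A$ and $B$, and $G[A \cup B] = G[I \cup J]$ is bipartite, showing $\phi(I, J) \in \mathcal{J}(G)$.

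For $\psi$, I first extract two consequences of the hypothesis that no $G$-edge joins $A$ to $B$: the set $A \cap B$ is independent (an edge inside $A \cap B$ would count as an $A$-to-$B$ edge), and every connected component of $G[A \triangle B]$ lies entirely in $A \setminus B$ or entirely in $B \setminus A$ (an edge crossing these sides would again run from $A$ to $B$). Since $G[A \cup B]$ is bipartite, each such component $C$ has a well-defined bipartition, and I let $C^+$ denote the side containing $v_0(C)$. I then set $I \cap J := A \cap B$ and, for each component $C$, send $C^+$ to $I \setminus J$ and the opposite side to $J \setminus I$ when $C \subseteq A \setminus B$, reversing the roles when $C \subseteq B \setminus A$. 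The resulting $I$ and $J$ are independent, since $A \cap B$ is isolated in $G[A \cup B]$ by the edge condition, distinct components contribute vertex-disjoint pieces with no edges between them, and within each component only one side of the bipartition is used.

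To finish I would check $\phi \circ \psi = \mathrm{id}$ and $\psi \circ \phi = \mathrm{id}$. Both compositions preserve $I \cap J = A \cap B$ and $I \triangle J = A \triangle B$ by construction, and the min-vertex rule matches up: under $\phi$ a component $C$ is routed to $A$ or $B$ according to whether $v_0(C)$ lies in $I$ or $J$, while under $\psi$ the side of $C$ containing $v_0(C)$ is routed to $I$ or $J$ according to whether $C$ lies in $A \setminus B$ or $B \setminus A$, so the two operations cancel component-by-component. The step I expect to need the most care is the structural lemma inside $\psi$, namely that the two conditions defining $\mathcal{J}(G)$ really do force both the independence of $A \cap B$ and the clean split of $G[A \triangle B]$ between $A \setminus B$ and $B \setminus A$; once these facts and the induced bipartition on each component are in hand, the rest of the verification reduces to bookkeeping on one component at a time.
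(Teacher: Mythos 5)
Your proof is correct, and at heart it is the same bipartite-swapping idea as the paper's, but you execute it with a different decomposition that also repairs an imprecision in the paper's write-up. The paper swaps globally: it takes ``the bipartition'' $W_1, W_2$ of $W = A \cup B$, forms the four cells $S_1, S_2, S_3, S_4$, and exchanges two of them, with the inverse map applying the same recipe to $A' \cup B'$. For those two maps to be mutually inverse one must use the \emph{same} bipartition of $G[W]$ in both directions; but when $G[W]$ is disconnected (which is typical --- for instance every vertex of $A \cap B$ is isolated in $G[A \cup B]$) the bipartition is not unique, and if one instead reads $W_1, W_2$ as the partition $(A, B \setminus A)$ named in the paper, the forward map becomes $(A,B) \mapsto (A \cup B, A \cap B)$, which is not injective. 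So the paper's argument implicitly requires a canonical, consistently chosen bipartition, which it never fixes. Your construction supplies exactly that missing ingredient: you decompose $G[I \triangle J]$ (equivalently $G[A \triangle B]$) into connected components, use the least vertex $v_0(C)$ of each component as a canonical marker, and rely on the fact that a connected bipartite graph has a unique bipartition, so both $\phi$ and $\psi$ are well defined and the inverse check reduces to bookkeeping one component at a time. The resulting bijection is not literally the same map as the paper's, and the verification is a bit longer, but the gain is that well-definedness --- the genuine crux of Zhao's trick --- is handled explicitly rather than swept into the phrase ``the bipartition of $W$.''
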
 
\begin{proof}
    
For any $W \subset V(G)$, such that $G[W]$ is bipartite, let $W_1, W_2$ be the bipartition of $W$. Now, for $A, B \in \mathcal{I}(G)$, note that $G[A \cup B]$ is bipartite since $(A, B \backslash A)$ is a valid partition (due to independence). 

Letting $W = A \cup B$, we define $S_1 = A \cap W_1$, $S_2 = A \cap W_2$, $S_3 = B \cap W_1$, and $S_4 = B \cap W_2$, as shown in Figure \ref{fig:bipartite}. The lines in the figure represent the existence of edges between subsets of vertices. Since $W_1, W_2$ is a bipartition, there are no edges within $W_1$ or $W_2$, and, since $A, B$ are independent, there are no edges within $A$ or $B$. This means that the only possible edges are between $S_1$ and $S_4$ or $S_2$ and $S_3$, as shown. 

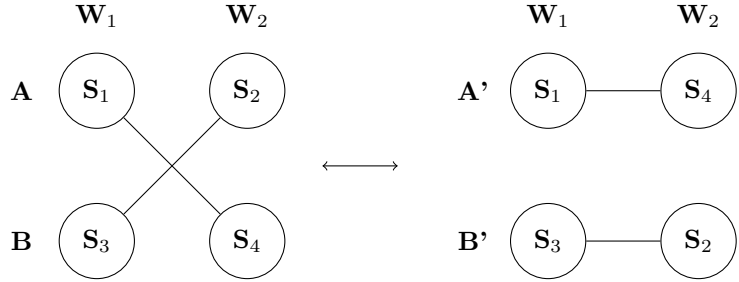
\begin{figure}[ht]
    \centering
    
\begin{tikzpicture}
    \node at (-1, 0) {\textbf{A}};
    \node at (-1, -2) {\textbf{B}};

    \node at (0, 1) {\textbf{W$_1$}};
    \node at (2, 1) {\textbf{W$_2$}};

    \node[circle, draw, minimum size=1cm] (s1) at (0, 0) {\textbf{S$_1$}};
    \node[circle, draw, minimum size=1cm] (s2) at (2, 0) {\textbf{S$_2$}};
    \node[circle, draw, minimum size=1cm] (s3) at (0, -2) {\textbf{S$_3$}};
    \node[circle, draw, minimum size=1cm] (s4) at (2, -2) {\textbf{S$_4$}};

    \draw[-] (s1) -- (s4);
    \draw[-] (s2) -- (s3);

    \draw[<->] (3, -1) -- (4, -1);
    
    \node at (5, 0) {\textbf{A'}};
    \node at (5, -2) {\textbf{B'}};

    \node at (6, 1) {\textbf{W$_1$}};
    \node at (8, 1) {\textbf{W$_2$}};

    \node[circle, draw, minimum size=1cm] (s1) at (6, 0) {\textbf{S$_1$}};
    \node[circle, draw, minimum size=1cm] (s4) at (8, 0) {\textbf{S$_4$}};
    \node[circle, draw, minimum size=1cm] (s3) at (6, -2) {\textbf{S$_3$}};
    \node[circle, draw, minimum size=1cm] (s2) at (8, -2) {\textbf{S$_2$}};

    \draw[-] (s1) -- (s4);
    \draw[-] (s3) -- (s2);
\end{tikzpicture}

    \caption{Bijection between $\mathcal{I}(G) \times \mathcal{I}(G)$ and $\mathcal{J}(G)$}
    \label{fig:bipartite}
\end{figure}

As shown in Figure \ref{fig:bipartite}, we will map $(A, B)$ to $(A', B') := (S_1 \cup S_4, S_2 \cup S_3) $. We can see that $(A', B') \in \mathcal{J}(G)$ since $A' \cup B' = A \cup B = W$ is clearly bipartite, and $A', B'$ are clearly independent from each other. 

Now, consider the inverse mapping which starts with $(A', B') \in \mathcal{J}(G)$. $W = A' \cup B'$ is bipartite by definition, and so we let $S_1 = A' \cap W_1$, $S_4 = A' \cap W_2$, $S_3 = B' \cap W_1$, and $S_2 = B' \cap W_2$ as shown in Figure \ref{fig:bipartite}. Similar to before, since $W$ is bipartite and $A', B'$ are independent from each other, the only edges in $G[W]$ are those between $S_1$ and $S_4$ and those between $S_2$ and $S_3$. So, if we map $(A', B')$ to $(A, B) = (S_1 \cup S_2, S_3 \cup S_4)$, we see that $A, B \in \mathcal{I}(G)$. 

So, we have clearly shown a bijection between $\mathcal{I}(G) \times \mathcal{I}(G)$ and $\mathcal{J}(G)$.
\end{proof}

\subsection{Proof of Theorem \ref{thm:regular}}

Now, let us see why this swapping trick extends the upper bound to non-bipartite graph. We will consider $G \times K_2$, the bipartite double cover of $G$, which contains vertices $(v, i)$ for $v \in V(G)$ and $i \in \{ 0, 1\}$ and edges $(u, 0), (v, 1)$ if and only if $uv \in E(G)$. Then, any independent set in $G\times K_2$ can be separated into its $i=0$ side $(A, 0)$ and $i=1$ side $(B, 1)$ where $A, B \subset V(G)$. Note that $(A, 0) \cup (B, 1) \in \mathcal{I}(G\times K_2)$ holds precisely when $A$ and $B$ are independent from each other in $G$. Then, we can see
\begin{align*}
    i(G \times K_2) &= |\{ A, B \subset V(G) : A \text{ independent from } B \}| \\
    &\geq  |\mathcal{J}(G)|\\
    &=  |\mathcal{I}(G) \times \mathcal{I}(G) | \\
    &= i(G)^2
\end{align*}
The second inequality is since $\mathcal{J}(G)$ is a subset of $\{ A, B \subset V(G) : A \text{ independent from } B \}$ since it has the additional bipartite condition. The third equality is from Lemma \ref{lma:bipartite}, since a bijection between $\mathcal{J}(G)$ and $\mathcal{I}(G) \times \mathcal{I}(G)$ means they have the same size. 

Note that if $G$ is $d$-regular, then so is $G \times K_2$ since degrees don't change. Since $G \times K_2$ is bipartite and has $2n$ vertices, Theorem \ref{thm:Kahn} gives 
\begin{align*}
    i(G \times K_2) \leq i(K_{d,d})^{(2n)/(2d)} = i(K_{d,d})^{n/d}
\end{align*}
and combining with the previous inequality gives
\begin{align*}
    i(G) \leq i(G \times K_2)^{1/2} \leq i(K_{d,d})^{n/(2d)}
\end{align*}
as desired. 

\section{Generalization to Irregular Graphs -- Sah et al.}
\label{Sah}

Despite the previous progress from Section~\ref{Zhao}, the graphs $G$ that we deal with are all $d$-regular. 
Sah~\cite{sah2019} generalizes this result to deal with all graphs.
To do so, techniques from the literature, particularly Section~\ref{Zhao}, are key to this generalization. 
However, the entropy approach taken in Section~\cite{KAHN_2001} is not sufficient, and Sah et al. find another approach to the problem.
Specifically, they show the following theorem.
\begin{thm}[Sah, Sawhney, Stoner, and Zhao~\cite{sah2019}]
\label{thm:sah-main}
Let $G$ be a graph, and let $d_v$ denote the degree of vertex $v$ in $G$. Then
\begin{align*}
i(G) &\leq 2^{\iso(G)} \prod_{uv \in E(G)} i(K_{d_u, d_v})^{1/(d_u d_v)} \\
&= 2^{\iso(G)} \prod_{uv \in E(G)} (2^{d_u} + 2^{d_v} - 1)^{1/(d_u d_v)}.
\end{align*}
Equality holds if and only if $G$ is a disjoint union of complete bipartite graphs and isolated vertices.
\end{thm}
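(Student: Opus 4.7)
The plan is to combine the bipartite-swapping reduction from Section~\ref{Zhao} with a generalized entropy argument for irregular bipartite graphs, and to handle isolated vertices separately. Since each isolated vertex simply doubles $i(G)$ without affecting any edge product, the $2^{\iso(G)}$ factor on the right is accounted for trivially; I would therefore assume $G$ has no isolated vertices during the main argument and restore the factor at the end.

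For the reduction to the bipartite case, I would apply Lemma~\ref{lma:bipartite} to the bipartite double cover $G\times K_2$. Each vertex $(v,i)$ has degree $d_v$ in $G\times K_2$, and each edge $uv\in E(G)$ lifts to two edges $(u,0)(v,1)$ and $(u,1)(v,0)$ in $E(G\times K_2)$, so
\[ \prod_{xy\in E(G\times K_2)} i(K_{d_x,d_y})^{1/(d_xd_y)} = \prod_{uv\in E(G)} i(K_{d_u,d_v})^{2/(d_ud_v)}. \]
Combined with the inequality $i(G\times K_2)\ge i(G)^2$ from Lemma~\ref{lma:bipartite}, establishing the theorem for the bipartite graph $G\times K_2$ would yield the general case upon taking square roots.

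For the bipartite case with bipartition $(\mathcal{E},\mathcal{O})$, Kahn's argument in Section~\ref{subsection:Kahn_Proof} is not quite sufficient: the step $H(I\cap\mathcal{O}) \le \tfrac{1}{d}\sum_v H(X_v)$ uses Shearer's lemma with \emph{uniform} coverage — each $x\in \mathcal{O}$ lies in exactly $d$ sets $N(v)$ — which is precisely the $d$-regular hypothesis. My plan is to replace this with the fractional form of Shearer's lemma, choosing weights $\alpha_v \ge 0$ with $\sum_{v\in N(x)} \alpha_v \ge 1$ for every $x\in \mathcal{O}$, where the $\alpha_v$'s are themselves functions of $d_v$ and $\{d_x\}_{x\in N(v)}$ tailored so that each edge contributes symmetrically in the final bound. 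Following Kahn's derivation would then produce an upper bound of the shape
\[ \log i(G) \le \sum_{v\in \mathcal{E}} \alpha_v\bigl[H(q(v)) + (1-q(v))\log(2^{d_v}-1)\bigr] + \sum_{v\in \mathcal{E}} q(v), \]
with the $q(v)$'s and the $\alpha_v$'s to be jointly optimized.

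The main obstacle is that the target bound $\prod_{uv\in E(G)}(2^{d_u}+2^{d_v}-1)^{1/(d_ud_v)}$ is inherently per-edge and couples $d_u$ and $d_v$ asymmetrically, so Kahn's clean single-variable calculus optimization at the end of Section~\ref{subsection:Kahn_Proof} must be replaced by a more intricate edge-by-edge analysis in which the contribution of each $uv$ is shown to be exactly $\log(2^{d_u}+2^{d_v}-1)/(d_ud_v)$. I expect this to require either a redistribution that gives each edge $uv$ its own weight rather than assigning a single weight per vertex $v$, or, as Sah et al. ultimately do, augmenting the entropy approach with a direct inductive / H\"older-type argument. Equality analysis of whichever optimization is used should then force the neighborhood of every vertex to have uniform degree and each connected component to be a complete bipartite graph $K_{d_u,d_v}$, which combined with the $2^{\iso(G)}$ contribution from isolated vertices yields the full equality characterization.
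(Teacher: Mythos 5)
Your reduction steps match the paper: peeling off isolated vertices, passing to the bipartite double cover $G\times K_2$ via Lemma~\ref{lma:bipartite} together with the observation that the edge product squares (the paper's identity $j(G)^2 = j(G\times K_2)$), and then restricting to connected bipartite graphs. The gap is in what comes after. Your plan for the bipartite irregular case is to rerun Kahn's entropy argument with a fractional Shearer lemma and per-vertex weights $\alpha_v$, but you never specify the weights or carry out the optimization, and you yourself concede that the per-vertex bound you would obtain, with terms $H(q(v)) + (1-q(v))\log(2^{d_v}-1)$, does not couple $d_u$ and $d_v$ the way the target $\prod_{uv}(2^{d_u}+2^{d_v}-1)^{1/(d_ud_v)}$ requires. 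This is not a technicality to be fixed by ``edge-by-edge redistribution'': the paper states explicitly that the entropy approach is not sufficient here, and this mismatch is exactly why the irregular case remained open after Kahn. Falling back on ``as Sah et al.\ ultimately do, augmenting the entropy approach with a direct inductive / H\"older-type argument'' is deferring to the very argument you are being asked to supply, so the heart of the proof is missing.

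Concretely, what the paper does instead (and what your proposal lacks) is: induct on the number of vertices of a connected bipartite $G$, pick a vertex $w$ of \emph{maximum} degree, and use the identity $i(G) = i(G-w) + i(G-w-N(w))$ together with the induction hypothesis to reduce the theorem to the purely analytic inequality $j(G-w) + j(G-w-N(w)) \le j(G)$, i.e.\ (\ref{eq:j-inequality}). That inequality is then attacked by stratifying $V(G)$ into distance levels $V_0, V_1, V_2, V_3, \dots$ from $w$ (bipartiteness and the maximality of $\Delta = d_w$ control how degrees can drop when $w$ and $N(w)$ are deleted, via the quantities $d_u^+$ and the isolated-vertex counts $|I_1|, |I_2|$), cancelling the common factor over $E_{\ge 3}$, and finishing with two applications of H\"older's inequality plus computation. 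None of this inductive machinery, nor any substitute for it, appears in your proposal, and your equality analysis (which in the paper is tied to when these H\"older steps are tight, and to transferring equality back through the double-cover reduction) is correspondingly unsubstantiated.
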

In the paper, Sah et al. state the theorem for graphs $G$ without any isolated vertices (i.e. $\iso(G) = 0$). 
However, it is easy to see that proving this theorem is equivalent. 
Since each of the isolated vertices have a binary choice of whether or not they are in an independent set, they contribute a factor of $2^{\iso(G)}$ to $i(G)$.
The paper from Sah et. al. also differs because they provide a proof of a generalized version of this problem involving the hard-core model, which will be discussed in Section~\ref{further}. 
We will now provide a sketch of the proof, specifically proving Theorem~\ref{thm:sah-main} rather than proving the generalized version.
The hope is that the proof of this theorem rather than the generalized version will yield more intuition about what the inequalities and techniques mean.
We will take the following steps to solve the problem.
\begin{enumerate}
    \item Reduce the problem to assuming $G$ is bipartite using the bipartite swapping trick covered in Section~\ref{subsec:bipartite}.
    \item Use a recursion involving $i(G)$ to reduce the problem to another inequality.
    \item Apply Hölder's inequality and computation to prove this inequality.
\end{enumerate}
We first reduce the problem to assuming $G$ is bipartite.
First, define $j(G)$ to be 
\[j(G) \coloneqq 2^{\iso(G)} \prod_{uv \in E(G)} i(K_{d_u, d_v})^{1/(d_u d_v)}.\]
We claim that $j(G)^2 = j(G \times K_2)$. 
Recall that for each edge $(u, v) \in G$, we have the edges $((u, 0), (v, 1)), ((u, 1), (v, 0)) \in G \times K_2$.
This means that the degree of $(u, i)$ in $G \times K_2$ is equal to that of $u$ in $G$ for $i = 0, 1$. 
Thus, when computing the product for $j(G \times K_2)$, each of the edges within $G$ is counted twice, giving $j(G)^2 = j(G \times K_2)$.

Recall from the bipartite swapping trick in Lemma~\ref{lma:bipartite} that $i(G)^2 \le i(G \times K_2)$. 
Combining these two inequalities, in order to prove $i(G) \le j(G)$, it suffices to show that $i(G \times K_2) \le j(G \times K_2)$.
As a result, it suffices to show that $i(G) \le j(G)$ for all bipartite graphs $G$. 

Next, we claim that we can assume $G$ is connected. 
Observe that if $G$ can be divided into two connected components $G_1$ and $G_2$, then $i(G) = i(G_1) i(G_2)$ and $j(G) = j(G_1)j(G_2)$. 
Thus, the problem reduces down to proving the inequality for each of the connected components, so we can assume $G$ is connected. 
The proof so far has been the same technique used by Zhao to solve the problem for all $d$-regular graphs.
However, the proof differs from here because we no longer have the $d$-regular assumption. 

Now, we induct on the number of vertices in $G$. 
It is simple to check the base case, which is when $G$ is an empty graph or an isolated vertex. 
Note that for any vertex $w$, we have 
\[i(G) = i(G-w) + i(G-w-N(w)),\]
where $G-w$ denotes $G$ with the vertex $w$ and the edges with $w$ deleted, and $G-w-N(w)$ is the same with the neighbors of $w$ deleted as well. 
In this equation, $i(G-w)$ represent the independent sets that don't contain $w$, and $i(G-w-N(w))$ contain the independent sets that contain $w$.
Since any vertex $w$ works, we select $w$ to be the one with maximum degree. 
By using induction, we have 
\begin{align*}
    i(G-w) &\le j(G-w) \\
    i(G-w-N(w)) &\le j(G-w-N(w)).
\end{align*}
Combining these, we have 
\[i(G) = i(G-w) + i(G-w-N(w)) \le j(G-w) + j(G-w-N(w)).\]
Thus, it suffices to prove that 
\begin{align}
    j(G-w) + j(G-w-N(w)) \le j(G). \label{eq:j-inequality}
\end{align}
To tackle this inequality, define $V_k$ to be the set of vertices with distance exactly $k$ to $w$. 
Intuitively, this works well with the inequality we have because 
\begin{align*}
    G &= V_0 \cup V_1 \cup \dots \\
    G-w &= V_1 \cup V_2 \cup \dots \\
    G-w-N(w) &= V_2 \cup V_3 \cup \dots 
\end{align*}
With nice expressions for each of these three values, there is a lot of potential to prove (\ref{eq:j-inequality}).
Let $E_k$ be the edges between $V_{k-1}$ and $V_k$. 
Define $E_{\ge k} \coloneqq \bigcup_{i \ge k} E_i$ and $E_{\le k}$ analogously. 
Furthermore, for each $i = 1, 2, \dots$, we let $I_i$ be the vertices that are not connected to any vertices in $V_{i+1}$. 
Finally, let $\Delta$ be the degree of $w$.
These definitions can be visualized in Figure~\ref{fig:sah}. 
From this visualization, it is evident why it is important for $G$ to be bipartite.
\begin{figure}
    \centering 
    \label{fig:sah}
    \includegraphics[width=8cm]{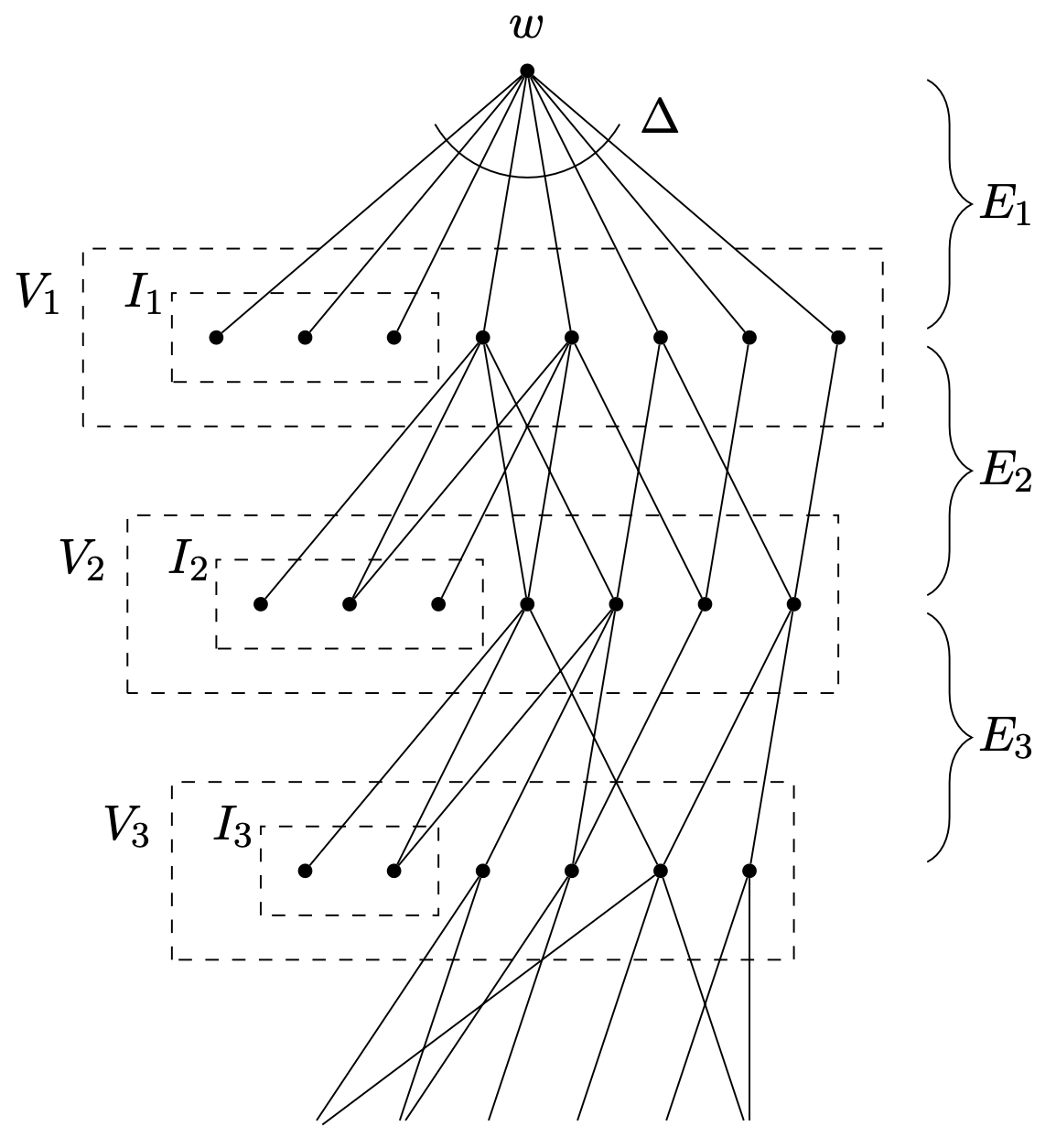}
    \caption{Setup for proof of Theorem 
    \ref{thm:sah-main}}
\end{figure}
With this setup, we return to \ref{eq:j-inequality}. 
For each $u \in V_k$, we let $d_u^+$ denote the number of neighbors of $u$ in $V_{k+1}$. 
We have 
\begin{align*}
j(G) &= 2^{\iso(G)} \prod_{(u,v) \in E} i(K_{d_u, d_v}), \\
j(G - w) &= 2^{\iso(G-w)} \prod_{\substack{(u,v) \in E_2 \\ v \in V_1}} i(K_{d_u, d_v})^{1/(d_u d_v)} \prod_{(u,v) \in E_{\geq 3}} i(K_{d_u, d_v})^{1/(d_u d_v)}, \\
j(G - w - N(w)) &= 2^{\iso(G-w-N(w))} \prod_{\substack{(u,v) \in E_3 \\ v \in V_2}} i(K_{d_u^+, d_v})^{1/(d_u d_v)} \prod_{(u,v) \in E_{\geq 4}} i(K_{d_u, d_v})^{1/(d_u d_v)}.
\end{align*}
The first equation is simply the definition of $j(G)$. 
For the second equation, each edge $(u, v) \in E$ is either in $E_2$ with $v \in V_1$ or in $E_{\ge 3}$. 
The same logic is used for the third equation. 
Thus, when considering (\ref{eq:j-inequality}), we have a shared factor of $\prod_{(u, v) \in E_{\ge 3}} i(K_{d_u, d_v})^{1/(d_ud_v)}$. 
The first two explicitly contain this term, while removing the term from the third will give a nicer expression. 
Removing this factor from the first term, we are left with the product over all edges $(u, v) \in E_{\le 2}$. 
Removing this factor from the second term, we have the product over all edges $(u, v) \in E_2$.
Finally, the third term becomes the product over all edges $(u, v) \in E_3$ divided by a factor that will be written out.
Considering the factors of the form $2^{\iso(\cdot)}$, we have 
\begin{align*}
    \iso(G) &= 0 \\
    \iso(G - w) &= |I_1| \\
    \iso(G-w-N(w)) &= |I_2|.
\end{align*}
This is evident from the diagram. 
Furthermore, we are using the fact that $G$ is connected. 
Removing this shared factor and replacing the terms of the form $2^{\iso(\cdot)}$, we are left with the equation 
\begin{align*}
    2^{|I_1|} \prod_{(u, v) \in E_2} i(K_{d_u, d_v})^{\frac{1}{d_u(d_v-1)}} + 2^{|I_2|} &\prod_{(u, v) \in E_3} \frac{i(K_{d_u^+, d_v})^{\frac{1}{d_u^+ d_v}}}{i(K_{d_u, d_v})^{\frac{1}{ d_ud_v}}} \\
    &\le \prod_{(u, v) \in E_1 \cup E_2} i(K_{d_u, d_v})^{\frac{1}{d_u d_v}} 
\end{align*}
At this point, we rewrite the expressions using the fact that 
\[i(K_{d_u, d_v}) = 2^{d_u} + 2^{d_v} - 1.\]
This is equivalent to 
\begin{align*}
2^{|I_1|}\prod_{(u,v)\in E_2}\left(2^{d_u}+2^{d_v-1}-1\right)^{\frac{1}{d_u(d_v-1)}} &+ 2^{|I_2|}\prod_{(u,v)\in E_3}\left(\frac{(2^{d_u^+}+2^{d_v}-1)^{\frac{1}{d_u^+d_v}}}{(2^{d_u}+2^{d_v}-1)^{\frac{1}{d_ud_v}}}\right)
 \\
 &\le \prod_{(u,v)\in E_1\cup E_2}\left(2^{d_u}+2^{d_v}-1\right)^{\frac{1}{d_ud_v}}
\end{align*}
This problem is significantly simpler --- we only have to points in $G$ that are at most distance $3$ away from $w$. 
Furthermore, since the graph is bipartite, there are many properties with which we can characterize these points distance $3$ away. 
The rest of the proof involves two applications of Hölder's inequality as well as careful computation.
Since this computation does not reveal too much insight on the problem and the proof strategy, the details are omitted.

\section{Further Remarks}

\label{further}

\subsection{Generalized Results}
\label{subsec:general}

In Section~\ref{Sah}, we proved an upper bound on the number of independent sets $i(G)$ for a general graph. 
In \cite{sah2019}, Sah et al. also discuss a lower bound on the number of independent sets $i(G)$. 

\begin{thm}
    \label{thm:sah-lower}
    Let $G$ be a graph and $d_u$ be the degree of vertex $v$ in $G$. 
    Then, we have 
    \[i(G) \ge \prod_{v \in V(G)} (d_v + 2)^{1/(d_v+1)},\]
    where equality holds if and only if $G$ is a disjoint union of cliques.
\end{thm}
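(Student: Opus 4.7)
The plan is to prove Theorem~\ref{thm:sah-lower} by induction on $|V(G)|$, using the standard recurrence $i(G) = i(G-v) + i(G-N[v])$, where $N[v]$ denotes the closed neighborhood of $v$. Write $f(G) := \prod_v (d_v+2)^{1/(d_v+1)}$ for the conjectured lower bound. Since both $i(G)$ and $f(G)$ factor over connected components, I would first reduce to the case of connected $G$. The base case of an isolated vertex gives $i(G) = 2 = f(G)$.

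For the inductive step, applying the hypothesis $i(G-v) \ge f(G-v)$ and $i(G-N[v]) \ge f(G-N[v])$ to the recurrence reduces the theorem to the purely combinatorial inequality
\[ f(G-v) + f(G-N[v]) \ge f(G) \]
for a suitable choice of $v$. A sanity check on $G = K_n$: for any $v$, the left side equals $f(K_{n-1}) + f(\emptyset) = n + 1 = f(K_n)$, so equality holds, consistent with the conjectured equality case of disjoint unions of cliques. A handful of other small cases (paths, cycles, stars) can be verified numerically to confirm the inequality is plausible and tight exactly on cliques.

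The hard part will be establishing this combinatorial inequality in general. The difficulty is that deleting $v$ changes not only its own contribution $(d_v+2)^{1/(d_v+1)}$ but also the degrees of its neighbors, replacing each factor $(d_u+2)^{1/(d_u+1)}$ by $(d_u+1)^{1/d_u}$ for every $u \in N(v)$; deleting $N[v]$ additionally perturbs the degrees of vertices at distance $2$ from $v$, and this perturbation depends on the fine structure of $G$ near $v$. My approach to this obstacle would be to pick $v$ strategically---for example a vertex of maximum degree, or a simplicial vertex whenever one exists---and then apply a weighted arithmetic--geometric mean inequality term by term, leveraging log-concavity properties of the function $t \mapsto (t+2)^{1/(t+1)}$ and the ratio $(t+2)^{1/(t+1)} / (t+1)^{1/t}$ on the positive integers to align the terms that appear on the two sides. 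For the equality characterization, tracking when the combinatorial inequality is tight at each stage of the induction should force $N[v]$ to be a clique containing no edges to the rest of $G$, which iterated across components yields exactly the disjoint-union-of-cliques structure.

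An alternative route worth keeping in mind is a direct probabilistic construction: sample a uniformly random linear order on $V$, and greedily build an independent set by including a vertex when it is the minimum of $N[v]$ in the order. This recovers the Caro--Wei observation that each $v$ is selected with probability $1/(d_v+1)$, and one could hope to upgrade this into a lower bound for $\log i(G)$ of the right form by introducing an auxiliary binary choice per vertex. However, I expect that obtaining the sharp constant $\log(d_v+2)$ inside the weight $1/(d_v+1)$, rather than merely $\log 2$, reduces back to essentially the same calculus as the combinatorial inequality above, so the inductive route is likely the cleanest.
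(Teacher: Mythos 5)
Your inductive skeleton coincides with the strategy the paper attributes to Sah et al.\ for this lower bound: reduce to connected graphs, use the recurrence $i(G) = i(G-v) + i(G - v - N(v))$, apply the inductive hypothesis to both terms, and thereby reduce the theorem to the inequality $f(G-v) + f(G-v-N(v)) \ge f(G)$ for a well-chosen $v$, where $f(G) = \prod_{u \in V(G)} (d_u+2)^{1/(d_u+1)}$. Your base case, the multiplicativity over components, and the sanity check on $K_n$ (where the reduction holds with equality, matching the extremal family) are all correct, and you rightly do not invoke the bipartite double cover, which is only needed for the upper bound.

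The genuine gap is that the inequality $f(G-v)+f(G-v-N(v)) \ge f(G)$ is the entire technical content of the theorem, and your proposal does not prove it. You correctly identify the obstruction --- deleting $N(v)$ perturbs the degrees of vertices at distance $2$ from $v$ in a way that depends on the local structure --- but the tools you offer (choose $v$ of maximum degree or a simplicial vertex, which need not exist; ``weighted AM--GM term by term''; log-concavity of $t \mapsto (t+2)^{1/(t+1)}$) are not shown to absorb that perturbation, and no candidate pairing of terms on the two sides is exhibited. The paper's own treatment (following Sah et al.) is explicit that this is where the work lies: one stratifies the graph into distance layers $V_0, V_1, V_2, \dots$ from the chosen vertex, tracks the forward degrees into the next layer, and then verifies the resulting layered inequality by direct computation and algebra; nothing in your proposal substitutes for that verification. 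The equality characterization is likewise only asserted --- you would need the explicit computation to show the reduction is strict unless $N(v) \cup \{v\}$ is a clique split off from the rest of $G$. As it stands you have a correct framing that matches the paper's approach, together with a few consistency checks, but not a proof; the Caro--Wei alternative you mention is, as you note yourself, unlikely to reach the sharp constant and does not close the gap.
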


This builds on the previous literature from Cutler and Radcliffe \cite{cutler2013maximum}.
In particular, Cutler and Radcliffe provide the same theorem for all $d$-regular graphs. 
The proof strategy is very similar to the one discussed in Section~\ref{Sah}.
In particular, we divide into sets $V_0, V_1, V_2, \dots$ again and use the same recursion. 
The key difference lies in the computation afterwards -- the proof of the lower bound does not use any techniques more than computation and algebra.
Furthermore, Sah et al. generalize this bound to one involving the partition function $P_G(\lambda) = \sum_{I \in \mathcal{I}(G)} \lambda^{|I|}$. 
In particular, we have 
\begin{thm}
    \label{thm:sah-lower-general}
    Let $G$ be a graph.
    Let $d_v$ be the degree of vertex $v$ in $G$. 
    Let $\lambda > 0$. 
    Then 
    \[P_G(\lambda) \ge \prod_{v \in V(G)} ((d_v+1)\lambda + 1)^{1/(d_v+1)},\]
    and equality holds if and only if $G$ is a disjoint union of cliques.
\end{thm}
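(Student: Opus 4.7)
The plan is to follow the inductive scaffolding of the upper bound proof in Section~\ref{Sah}, with two notable differences. First, because the right-hand side is a product over \emph{vertices} rather than edges, the bipartite double-cover reduction is not useful here --- indeed $P_G(\lambda)^2 \le P_{G \times K_2}(\lambda)$ points the wrong way for a lower bound --- so we work directly with $G$. Second, as the excerpt notes, the terminal step is a direct algebraic computation rather than an application of H\"older's inequality.

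Write $h(G) := \prod_{v \in V(G)} \bigl((d_v+1)\lambda + 1\bigr)^{1/(d_v+1)}$ for the right-hand side. Both $P_G(\lambda)$ and $h(G)$ are multiplicative over connected components (the first because independent sets factor, the second from its product form), so it suffices to treat connected $G$; the base case of a single vertex is immediate, since $P_G(\lambda) = 1 + \lambda = h(G)$. For the inductive step, pick a vertex $w$ and apply the standard hard-core deletion recursion
\[ P_G(\lambda) = P_{G-w}(\lambda) + \lambda \, P_{G-w-N(w)}(\lambda). \]
Applying the inductive hypothesis to $G-w$ and $G-w-N(w)$ (both strictly smaller), the problem reduces to showing
\begin{equation}
 h(G-w) + \lambda \, h(G-w-N(w)) \ge h(G). \label{eq:sketch-key-lower}
\end{equation}

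To analyze \eqref{eq:sketch-key-lower}, partition $V(G)$ into distance classes $V_0 = \{w\}, V_1, V_2, \ldots$ exactly as in Section~\ref{Sah}. Deleting $w$ removes the $w$-factor from $h$ and lowers the degree of each $v \in V_1$ by $1$; further deleting $N(w) = V_1$ additionally removes the $V_1$-factors and lowers the degree of each $u \in V_2$ by $d_u^+$, the number of its neighbors in $V_1$. After dividing through by $h(G)$, the $V_{\ge 3}$ factors cancel and \eqref{eq:sketch-key-lower} becomes a concrete finite inequality whose variables are just the degrees of the vertices in $\{w\} \cup V_1 \cup V_2$ (plus the $d_u^+$).

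The main obstacle is the final algebraic step: one must juggle the exponents $1/(d_v+1)$ together with the bases $(d_v+1)\lambda + 1$ as the degrees change on $V_1$ and $V_2$. I would first verify that \eqref{eq:sketch-key-lower} is tight when $G$ is a single clique $K_{d_w+1}$, in which case $V_2 = \emptyset$ and both sides collapse to $(d_w+1)\lambda + 1$. For general connected $G$, the ratios $h(G-w)/h(G)$ and $h(G-w-N(w))/h(G)$ factor as products of per-vertex terms on $V_1$ and $V_2$, and I expect the inequality to follow from a weighted AM--GM / concavity argument on the log scale applied to the functions $d \mapsto \tfrac{1}{d+1}\log((d+1)\lambda+1)$. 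Tracing the equality conditions backward through the induction should force $V_2 = \emptyset$ and every vertex of $V_1$ to have degree $d_w$, i.e., each connected component is a clique, recovering the stated equality characterization.
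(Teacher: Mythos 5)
Your plan is essentially the paper's own route: the paper treats this theorem by the same strategy as Section~\ref{Sah} --- induction on vertices using the deletion recursion $P_G(\lambda) = P_{G-w}(\lambda) + \lambda P_{G-w-N(w)}(\lambda)$, reduction over connected components, the distance-class decomposition $V_0, V_1, V_2, \dots$, and a closing step that is ``computation and algebra'' rather than H\"older --- which is exactly the structure you lay out, including dropping the bipartite double-cover step. The only points of difference are cosmetic: the paper's ``same recursion'' takes $w$ of maximum degree while you leave the choice open, and, like the paper, you stop short of carrying out the final algebraic inequality, so your proposal is consistent with and no less complete than the paper's treatment.
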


This is a lower bound on the partition function of $G$, and it is a generalization on the lower bound on the number of independent sets. 
Setting $\lambda = 1$, we arrive at Theorem~\ref{thm:sah-lower}. Beyond the lower and upper bound on the number of independent sets $i(G)$, Sah et al. extend the result to the independent set polynomial.
\begin{definition}
    A \emph{bigraph} $G = (A, B, E)$ is a bipartite graph with bipartition $V(G) = A \sqcup B$ and edge set $E \subseteq A \times B$. 
    Then, the two-variable independent set polynomial of $G$ is 
    \[P_G(\lambda, \mu) = \sum_{I \in \mathcal{I}(G)} \lambda^{|I \cap A|} \mu^{|I \cap B|}.\]
\end{definition}
Note that setting $\mu = 1$ and letting $A = G$, we get the \emph{hard-core model} with parameter $\lambda$ discussed in class:
\[P_G(\lambda, 1) = \sum_{I \in \mathcal{I}(G)} \lambda^{|I|}.\]
Furthermore, setting $\lambda = 1$ and $\mu = 1$, we get the number of independent sets $i(G)$. 
Sah et al. prove the result 
\begin{thm}
    \label{thm:sah-general}
    Let $G = (A, B, E)$ be a bigraph without isolated vertices. 
    Let $d_v$ denote the degree of $v$ in $G$. 
    Let $\lambda, \mu > 0$. 
    Then 
    \[P_G(\lambda, \mu) \leq \prod_{\substack{uv \in E \\ u \in A, v \in B}} \left( (1 + \lambda)^{d_v} + (1 + \mu)^{d_u} - 1 \right)^{1 / (d_u d_v)},\]
    and equality holds if and only if $G$ is a disjoint union of complete bipartite graphs. 
\end{thm}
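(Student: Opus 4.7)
The plan is to mirror the strategy of Section~\ref{Sah} used for Theorem~\ref{thm:sah-main}, adapting each step to the two-variable partition function. Since $G$ is already a bigraph, the bipartite swapping trick is unnecessary, which is actually a simplification. However, the degrees $d_u$ and $d_v$ must be tracked carefully because the role of a vertex depends on which part of the bipartition it lies in, and the weights $\lambda$ and $\mu$ are asymmetric. Write $q(G)$ for the right-hand side of the claimed bound. First, I would reduce to the case of $G$ connected, since both $P_G(\lambda,\mu)$ and $q(G)$ are multiplicative over connected components (treating isolated-vertex contributions separately, as in Section~\ref{Sah}).

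Next, I would induct on $|V(G)|$, with the isolated-vertex / empty case being trivial. For the inductive step, pick a vertex $w \in A$ (say, of maximum degree) and split independent sets according to whether $w$ is included, yielding the recursion
\[ P_G(\lambda,\mu) = P_{G-w}(\lambda,\mu) + \lambda\, P_{G-w-N(w)}(\lambda,\mu). \]
Applying the inductive hypothesis to each summand reduces the theorem to establishing the pure $q$-inequality
\[ q(G-w) + \lambda\, q(G-w-N(w)) \le q(G). \]
I would then set up the distance-layer decomposition $V_0=\{w\},V_1,V_2,\dots$ with $E_k$ the edges between $V_{k-1}$ and $V_k$, exactly as in the proof of Theorem~\ref{thm:sah-main}. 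Bipartiteness forces $V_1,V_3,\ldots \subseteq B$ and $V_2,V_4,\ldots \subseteq A$, so the parity of $k$ determines which side of each edge in $E_k$ carries weight $\lambda$ versus $\mu$; this keeps the asymmetry organized. Expanding $q(G)$, $q(G-w)$, and $q(G-w-N(w))$ in terms of these layers, the common factor $\prod_{(u,v)\in E_{\ge 3}}\bigl((1+\lambda)^{d_v}+(1+\mu)^{d_u}-1\bigr)^{1/(d_ud_v)}$ cancels, and the inequality collapses to a local inequality involving only $E_1$, $E_2$, and a correction from $E_3$ (where for $u \in V_2$ the effective degree $d_u^+$ counts only neighbors in $V_3$).

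The main obstacle is the local inequality after the cancellation: it is a two-variable analogue of the final step in the proof of Theorem~\ref{thm:sah-main}, and it is precisely the computation that Sah et al.\ omit as ``careful computation'' in the single-variable case. I would attack it by two applications of H\"older's inequality, first to decouple the $V_3$-contribution (which produces the $d_u^+$ terms) from the $V_2$-factors, and second to combine the resulting $V_1$ and $V_2$ factors into the single product over $E_1 \cup E_2$. The choice of H\"older exponents must be matched to the bipartition parity so that the $(1+\lambda)$ and $(1+\mu)$ factors land on the correct sides of each edge; this is where the two-variable version is genuinely harder than the one-variable one, and where one needs the log-concavity / log-convexity properties of maps of the form $x \mapsto \log\bigl((1+\lambda)^x + c\bigr)$ evaluated at the real-valued relaxation $d_u^+$.

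The equality characterization would follow by tracing when each H\"older application is tight: tightness forces all $d_u^+$ to be constant within a layer and forces each connected component explored from $w$ to be a complete bipartite graph $K_{d_u,d_v}$. Because this final step is essentially a long computation in one real parameter per layer and two fugacities, I would expect the bulk of the technical work to sit here, with the structural reduction through connectedness, the $P_G$ recursion, and the distance-layer bookkeeping being straightforward adaptations of the arguments already given for Theorem~\ref{thm:sah-main}.
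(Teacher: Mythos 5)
Your proposal follows essentially the same route as the paper: the paper justifies this theorem only by remarking that the proof technique is exactly the one described in Section~\ref{Sah} (reduction to connected components, the vertex-deletion recursion now carrying a factor of $\lambda$, the distance-layer decomposition with cancellation of the $E_{\ge 3}$ factor, and two applications of H\"older's inequality), which is precisely the adaptation you outline, including the observation that the bipartite swapping trick is no longer needed. Like the paper, you leave the final H\"older computation and the equality analysis at the level of a sketch, so your proposal matches the paper's own treatment in both approach and level of detail.
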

First, note that setting $\lambda = \mu = 1$ gives Theorem~\ref{thm:sah-main}.
Furthermore, we can easily extend this result to all bigraphs by adding a factor of $2^{\iso(G)}$. 
The proof technique is exactly the same as the one described in Section~\ref{Sah}.





\bibliographystyle{plain}
\bibliography{main}

\end{document}